\documentclass[11pt]{amsart}
\usepackage[T1]{fontenc}
\usepackage{lmodern}
\usepackage{amssymb}
\usepackage{microtype}
\usepackage[hidelinks]{hyperref}
\numberwithin{equation}{section}
\hypersetup{
  pdftitle={d = omega\_1 implies a = omega\_1},
  pdfauthor={Jos\'e de Jes\'us Pelayo-G\'omez},
  pdfsubject={Domination, almost disjointness, and preservation at omega\_1},
  pdfkeywords={Cardinal invariants, dominating number, maximal almost disjoint family, Roitman's problem, bounding forcing, indestructibility}
}

\newtheorem{theorem}{Theorem}[section]
\newtheorem{lemma}[theorem]{Lemma}
\newtheorem{corollary}[theorem]{Corollary}

\newcommand{\om}{\omega}
\newcommand{\omo}{\omega_1}
\newcommand{\aaa}{\mathfrak a}
\newcommand{\bb}{\mathfrak b}
\newcommand{\dd}{\mathfrak d}
\newcommand{\A}{\mathcal A}
\newcommand{\leas}{\leq^*}

\title{$\dd=\omo\text{ implies }\aaa=\omo$}
\author{Jos\'e de Jes\'us Pelayo-G\'omez}
\address{Independent researcher}
\email{pelayuss@gmail.com}
\date{}
\subjclass[2020]{Primary 03E17; Secondary 03E05, 03E40}
\keywords{Cardinal invariants, dominating number, maximal almost disjoint family,
Roitman's problem, bounding forcing, indestructibility}

\begin{document}

\begin{abstract}
We prove in ZFC that $\mathfrak d=\omega_1$ implies $\mathfrak a=\omega_1$,
settling a question of Roitman from the 1970s. Here $\mathfrak d$ is the
least size of a family that eventually dominates every function in
$\omega^\omega$, and $\mathfrak a$ is the least size of an infinite
maximal almost disjoint (MAD) family of infinite subsets of $\omega$.
Consequently, $\mathfrak a\leq\mathfrak d$ whenever the continuum
$\mathfrak c=2^{\aleph_0}$ is at most $\omega_2$. This answers
negatively Shelah's question whether $\mathfrak d<\mathfrak a$ is
consistent with $\mathfrak c=\omega_2$, and shows that his model of
$\mathfrak d=\omega_2<\mathfrak a=\mathfrak c=\omega_3$ has the least
possible values of $\mathfrak d$, $\mathfrak a$ and $\mathfrak c$ among
models of $\mathfrak d<\mathfrak a$.
The proof also gives a preservation theorem: from any dominating
family of size $\omega_1$, we construct a MAD family of size
$\omega_1$ that remains maximal in every outer model in which the
given family is still dominating. Thus, under $\mathfrak d=\omega_1$,
some MAD family is indestructible by every $\omega^\omega$-bounding
forcing; it can moreover be chosen almost strongly separable, hence
Cohen-indestructible. These results give negative answers, under
$\mathfrak d=\omega_1$, to two questions of Hru\v{s}\'ak and to one
of Brendle, Guzm\'an, Hru\v{s}\'ak and Raghavan.
\end{abstract}

\maketitle

\section{Introduction}

An infinite family $\A\subseteq[\om]^\om$ is \emph{maximal almost
disjoint}, or \emph{MAD}, if its members have pairwise finite
intersections and every infinite subset of $\om$ has infinite
intersection with some member of $\A$. The almost disjointness
number $\aaa$ is the least size of a MAD family. The dominating
number $\dd$ is the least size of a subset of $\om^\om$ that is
cofinal in the eventual domination order. We write
$\mathfrak c=2^{\aleph_0}$.

The bounds $\omo\leq\aaa\leq\mathfrak c$ and
$\omo\leq\dd\leq\mathfrak c$ show that the
continuum hypothesis (CH) implies $\aaa=\dd=\omo$.
The hypothesis $\dd=\omo$ provides a dominating family of size
$\omo$ without requiring $[\om]^\om$ to have that size.
Roitman asked whether $\dd=\omo$ nevertheless implies $\aaa=\omo$.
Brendle dates the question to the 1970s \cite[Problem~5]{B13}.
It is Problem~12 in \emph{Open Problems in Topology II}
\cite[p.~183]{HS07} and Problem~4.1 in Miller's problem list
\cite{M15}, and it is stated in \cite[p.~351]{H01c},
\cite[Question~9]{RS12}, \cite[Problem~1.1]{GH22} and
\cite[Problem~1.1]{BGHR26}.
We give a positive answer.

\begin{theorem}\label{thm:main}
In ZFC, $\dd=\omo$ implies $\aaa=\omo$.
\end{theorem}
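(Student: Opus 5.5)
The plan is to prove the stronger preservation statement from the abstract. Fix a dominating family $\{f_\alpha:\alpha<\omo\}$, which we may take $\leas$-increasing and consisting of strictly increasing functions. I will build, by recursion on $\alpha<\omo$, an almost disjoint family $\A=\{A_\alpha:\alpha<\omo\}$ such that every infinite $X\subseteq\om$ meets some $A_\alpha$ infinitely. Since $\A$ has size $\omo$ and $\mathfrak c$ may be larger, the recursion cannot handle the sets $X$ one at a time. Instead I will encode each $X$ by a function that $X$ itself determines, and use domination to capture $X$ at a countable stage.

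The key coding step runs as follows. At stage $\alpha$ the family $\A_\alpha=\{A_\beta:\beta<\alpha\}$ is countable, so I enumerate it as $\{B^\alpha_n:n\in\om\}$. For an infinite $X$ that is almost disjoint from all $B^\alpha_n$, let $g_X^\alpha(n)$ be the least element of $X\setminus\bigcup_{i<n}B^\alpha_i$ above $n$; this element exists because $X$ is almost disjoint from each of finitely many sets. More useful is the function $n\mapsto\min(X\setminus(n\cup\bigcup_{i<n}B^\alpha_i))$. If $f_\alpha$ dominates this function, then for almost all $n$ the interval $[n,f_\alpha(n)]$ contains a point of $X$ outside $B^\alpha_0,\dots,B^\alpha_{n-1}$. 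I therefore want $A_\alpha$ to be a \emph{selector}: it is chosen inside $\om\setminus\bigcup_{i<n}B^\alpha_i$ on blocks determined by $f_\alpha$, almost disjoint from each $B^\alpha_n$ by construction, and chosen so that it catches infinitely many such points of $X$. A single set cannot contain every candidate. So rather than one set per stage, I would use a countable tree of sets, or partition intervals $[n,f_\alpha(n)]$ into pieces and take $A_\alpha$ to be the union over an infinite sparse sequence of $n$ of the whole good part of the interval. The whole good part is $[n,f_\alpha(n)]\setminus\bigcup_{i<n}B^\alpha_i$, and it has finite intersection with each $B^\alpha_k$ because only indices $n\le k$ contribute. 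Taking $n$ along a sparse sequence $n_0<n_1<\cdots$ with $n_{j+1}>f_\alpha(n_j)$ makes the intervals disjoint. Then every $X$ whose code is dominated by $f_\alpha$ at every $n_j$ with $j$ large meets $A_\alpha$ infinitely.

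The argument for maximality goes as follows. Given $X$ almost disjoint from all of $\A$, the code depends on $\alpha$ through the enumeration. This is the main obstacle, because $X$ must be captured at some $\alpha$ with a code built from $\A_\alpha$, while domination only yields some $\alpha$ with $f_\alpha\geq^* h$ for a single fixed $h$. To decouple the two, I would make the enumerations coherent. Choose the recursion so that for every $\beta<\alpha$, each $A_\beta$ appears in the enumeration of $\A_\alpha$ with a controlled index: fix bijections $e_\alpha:\om\to\alpha$ in the style of a coherent/Todorcevic-type $\rho$-function, so that the index of $\beta$ grows slowly in $\alpha$. Then I would bound all the codes $g^\alpha_X$, for $\alpha<\omo$, by one function $h_X$ up to a uniform finite modification. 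Failing that, I would use the closure argument: take a countable elementary submodel $M\ni\langle f_\alpha\rangle,\A$-construction with $\delta=M\cap\omo$. I would then run the construction so that $A_\delta$ handles every $X$ whose codes relative to $\A_\delta$ are dominated by $f_\delta$, arranging that $f_\delta$ dominates all $f_\beta$ with $\beta<\delta$. The step needing care is showing that for a given $X$ some such $\delta$ works. Its code relative to $\A_\delta$ is bounded by a function definable from $X$ and the sequence $\langle f_\beta:\beta<\delta\rangle$, and I expect a diagonal "pressing-down" over a club of $\delta$ to find a $\delta$ where $f_\delta$ dominates it. Because maximality will be witnessed only via domination by the $f_\alpha$, the same proof yields preservation in any outer model where $\{f_\alpha\}$ stays dominating, so the finished argument should naturally establish that stronger form.

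The sparse sequence $n_j$ is another potential failure point, since it must be chosen so that $X$ lands in infinitely many of the selected intervals. I would handle this by taking $A_\alpha$ to use all $n$ in a set that $f_\alpha$ makes co-finite in blocks, namely the intervals $[f^{(k)}_\alpha(0),f^{(k+1)}_\alpha(0))$. Domination then forces $X$ into almost every block, and then into every block.
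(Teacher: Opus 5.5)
Your stage-$\alpha$ set is fine as far as it goes: $A_\alpha$ is almost disjoint from $\A_\alpha$ and meets every $X$ whose code $g^\alpha_X$ (relative to the enumeration of $\A_\alpha$) satisfies $g^\alpha_X\leas f_\alpha$. The gap is the step you flag and then leave to an expected pressing-down argument. For each $X$ you need a single $\delta$ that serves both as the enumeration of $\A_\delta$ defining the code and as the bound $f_\delta$, and nothing in the proposal produces one.

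Closing under $F$ with $g^\gamma_X\leas f_{F(\gamma)}$ gives only this: for each \emph{fixed} $k$, the map $m\mapsto\min(X\setminus(m\cup B^\delta_0\cup\dots\cup B^\delta_k))$ is $\leas f_\delta$, with thresholds depending on $X$ and $k$. This is exactly what the paper extracts in Section~5. Your code is the diagonal of these functions. That $f_\delta$ eventually dominates each $f_\beta$ with $\beta<\delta$ gives no control over a diagonal whose thresholds are dictated by $X$. Nor is there a regressive function here to which Fodor's lemma could apply.

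Requiring that $f_\delta$ dominate the $\delta$-th code of every $X$ is a $\leas$-guessing requirement of the kind supplied by Hru\v{s}\'ak's $\diamondsuit_{\dd}$. That principle is strictly stronger than $\dd=\omo$. The coherent-enumeration alternative is equally unsupported. Here $\bb=\omo$, so $\omo$ functions need not have a common $\leas$-bound, and you give no mechanism forcing these codes to have one.

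The paper breaks the coupling with two separate applications of domination. First it picks $\alpha$ with $e_X\leq d_\alpha$, which fixes the enumeration $\mathbf c^\alpha$ and intervals $I^\alpha_j$ that $X$ meets. Then it picks $\beta$ with the trace $k\mapsto 1+\max(X\cap c^\alpha_k)$ bounded by $d_\beta$, so $X\subseteq G(\mathbf c^\alpha,d_\beta)$.

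Each $\alpha$ has $\omo$ many $\beta$'s, so the set handling $(\alpha,\beta)$ is built at a later stage $\Gamma(\alpha,\beta)\geq\alpha$ and takes whole pieces $G_{\alpha,\beta}\cap I^\alpha_j$. It must still be almost disjoint from members added at stages in $[\alpha,\Gamma(\alpha,\beta))$, which $X$ may meet inside those intervals. For this the paper maintains $\subseteq^*$-decreasing sets $R^\alpha_\xi$ of reserved interval indices, with pseudo-intersections at limit stages. Every member added from stage $\alpha$ on then meets only finitely many reserved intervals. This reservation mechanism, together with the pair bookkeeping $\Gamma$, is the missing idea.

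Your blocks $[f^{(k)}_\alpha(0),f^{(k+1)}_\alpha(0))$ match the paper's $I^d_j$, and the choice of the sparse sequence $n_j$ is harmless. Neither addresses the capture problem.
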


Shelah proved the consistency of
$\dd=\om_2<\aaa=\mathfrak c=\om_3$ without large cardinals
\cite[Lemma~3.1]{S04}, and asked whether $\dd<\aaa$ is consistent
with $\mathfrak c=\om_2$ \cite[p.~69, Question~10.1(1)]{S00},
\cite[Problem~0.1]{S04}, \cite[Problem~0.1]{S21}. The following
consequence answers this question negatively and shows that his model
has the least possible values of $\dd$, $\aaa$ and $\mathfrak c$ among
models of $\dd<\aaa$.

\begin{corollary}\label{cor:shelah}
If $\dd\in\{\omo,\mathfrak c\}$, then $\aaa\leq\dd$.
In particular, $\mathfrak c\leq\om_2$ implies $\aaa\leq\dd$.
Every model of $\dd<\aaa$ therefore satisfies
$\dd\geq\om_2$ and $\mathfrak c\geq\aaa\geq\om_3$, and
$\dd=\om_2<\aaa=\mathfrak c=\om_3$ is consistent relative to ZFC.
\end{corollary}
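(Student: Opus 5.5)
Corollary~\ref{cor:shelah} follows from Theorem~\ref{thm:main} together with standard ZFC facts and Shelah's consistency result; I take it in four steps.

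\textbf{Step 1: the first claim.} If $\dd=\omo$, Theorem~\ref{thm:main} gives $\aaa=\omo=\dd$. If $\dd=\mathfrak c$, then $\aaa\leq\mathfrak c=\dd$ holds trivially, since every MAD family is a subset of $[\om]^\om$. So $\dd\in\{\omo,\mathfrak c\}$ implies $\aaa\leq\dd$.

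\textbf{Step 2: the case $\mathfrak c\leq\om_2$.} Since $\omo\leq\dd\leq\mathfrak c\leq\om_2$, we have $\dd=\omo$, or $\dd=\om_2=\mathfrak c$. Hence $\dd\in\{\omo,\mathfrak c\}$, and Step~1 applies.

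\textbf{Step 3: lower bounds in any model of $\dd<\aaa$.} By Step~1, $\dd\notin\{\omo,\mathfrak c\}$. Since $\dd\geq\omo$, this gives $\dd\geq\om_2$. It also gives $\dd<\mathfrak c$. Then $\aaa>\dd\geq\om_2$ forces $\aaa\geq\om_3$, and $\mathfrak c\geq\aaa$ always holds.

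\textbf{Step 4: consistency.} I would cite Shelah's theorem \cite[Lemma~3.1]{S04}, which gives the consistency of $\dd=\om_2<\aaa=\mathfrak c=\om_3$ relative to ZFC. Step~3 shows that these values are the least possible.

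The only substantive input is Theorem~\ref{thm:main}, which is assumed here. No step of the deduction poses a real obstacle.
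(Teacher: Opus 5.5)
Your proof is correct and follows the paper's argument essentially step for step: Theorem~\ref{thm:main} together with $\aaa\leq\mathfrak c$ gives the first claim, $\omo\leq\dd\leq\mathfrak c\leq\om_2$ gives the second, $\omo<\dd<\mathfrak c$ and $\aaa\geq\dd^+$ give the lower bounds, and Shelah's consistency result gives the last claim. The only detail the paper adds is that Shelah's Lemma~3.1 is applied over a model of GCH, with his parameters $\mu=\om_2$ and $\lambda=\om_3$.
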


\begin{proof}
The case $\dd=\omo$ is Theorem~\ref{thm:main}, and the case
$\dd=\mathfrak c$ follows from $\aaa\leq\mathfrak c$.
If $\mathfrak c\leq\om_2$, these are the only possibilities for
$\dd$. In any model of $\dd<\aaa$, the first assertion gives
$\omo<\dd<\mathfrak c$, so $\dd\geq\om_2$ and
$\mathfrak c\geq\aaa\geq\dd^+\geq\om_3$.
Applying \cite[Lemma~3.1]{S04} over a
model of GCH, with Shelah's parameters $\mu=\om_2$ and $\lambda=\om_3$, gives
$\dd=\om_2<\aaa=\mathfrak c=\om_3$.
\end{proof}

Vaughan asked whether $\mathfrak i<\aaa$ is consistent, where
$\mathfrak i$ is the least size of a maximal independent family
\cite[Problem~1.1(c)]{V90}; see also \cite[Question~2]{FS22}.
Shelah's inequality $\dd\leq\mathfrak i$
\cite[Appendix, Theorem~A.1]{V90} gives the following restriction.

\begin{corollary}\label{cor:vaughan}
If $\mathfrak i=\omo$, then $\aaa=\omo$. If $\mathfrak c\leq\om_2$,
then $\aaa\leq\mathfrak i$. Every model of $\mathfrak i<\aaa$ satisfies
$\om_2\leq\dd\leq\mathfrak i<\aaa\leq\mathfrak c$.
\end{corollary}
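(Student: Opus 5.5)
This corollary follows from Theorem~\ref{thm:main}, Corollary~\ref{cor:shelah}, and Shelah's inequality $\dd\leq\mathfrak i$ from \cite[Appendix, Theorem~A.1]{V90}. I will prove the three assertions in order, each feeding into the next.

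\emph{First assertion.} Assume $\mathfrak i=\omo$. Shelah's inequality gives $\dd\leq\mathfrak i=\omo$. Since $\dd\geq\omo$ in ZFC (no countable family dominates, by diagonalization), we get $\dd=\omo$. Theorem~\ref{thm:main} then yields $\aaa=\omo$.

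\emph{Second assertion.} Assume $\mathfrak c\leq\om_2$. Corollary~\ref{cor:shelah} gives $\aaa\leq\dd$, and Shelah's inequality gives $\dd\leq\mathfrak i$. Chaining these gives $\aaa\leq\mathfrak i$.

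\emph{Third assertion.} Suppose $\mathfrak i<\aaa$. Then $\dd\leq\mathfrak i<\aaa$, so $\dd<\aaa$. Corollary~\ref{cor:shelah} then gives $\dd\geq\om_2$. Together with $\aaa\leq\mathfrak c$, this yields the chain $\om_2\leq\dd\leq\mathfrak i<\aaa\leq\mathfrak c$.

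As a side remark, the first assertion follows from the third: $\mathfrak i=\omo$ is incompatible with $\om_2\leq\dd\leq\mathfrak i$, so $\aaa\leq\mathfrak i=\omo$. I will nonetheless present the direct argument above.

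All the weight rests on Theorem~\ref{thm:main} and the cited inequality $\dd\leq\mathfrak i$. Given these, there is no real obstacle. The only point needing care is to use the ZFC lower bound $\dd\geq\omo$ so that $\dd\leq\omo$ pins $\dd$ down exactly.
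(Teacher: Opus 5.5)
Your proof is correct and matches the paper's argument. The inequality $\dd\leq\mathfrak i$ reduces $\mathfrak i=\omo$ to $\dd=\omo$ and hence to Theorem~\ref{thm:main}, and the remaining two assertions come from chaining $\dd\leq\mathfrak i$ with Corollary~\ref{cor:shelah} and $\aaa\leq\mathfrak c$.
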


\begin{proof}
If $\mathfrak i=\omo$, then $\dd=\omo$, so Theorem~\ref{thm:main}
applies. The remaining assertions follow from
$\dd\leq\mathfrak i$ and Corollary~\ref{cor:shelah}.
\end{proof}

The first assertion supplies the case $\kappa=\om$ of the implication
$\mathfrak i(\kappa)=\kappa^+\Rightarrow\aaa(\kappa)=\kappa^+$,
proved for regular uncountable $\kappa$ by Fischer and Montoya
\cite[Corollary~2.5]{FM22}.

Earlier results connected the existence of small MAD families
with guessing principles. Hru\v{s}\'ak proved that
$\diamondsuit_{\dd}$ implies $\aaa=\omo$
\cite[Proposition~I.2]{H01}. This guessing principle is strictly
stronger than $\dd=\omo$: already $\mathrm{CH}$ is consistent
with its failure \cite[Theorem~II.6]{H01}. Theorem~\ref{thm:main} thus
removes the guessing hypothesis from this result.
Moore, Hru\v{s}\'ak and D\v{z}amonja later proved that the parametrized
diamond $\diamondsuit(\bb)$ implies $\aaa=\omo$
\cite[Theorem~7.2]{MHD04}, where $\bb$ is the least size of an
unbounded family in $\om^\om$. Their hypothesis holds in the
Miller model, where $\dd>\omo$, but fails in some model of CH
\cite[Section~8]{MHD04}. Thus $\diamondsuit(\bb)$ and $\dd=\omo$
are incomparable hypotheses.

Under the hypothesis $\dd=\omo$ itself, Raghavan and Shelah
constructed a van Douwen MAD family of functions that is the
union of $\omo$ compact sets \cite[Theorem~1]{RS12}.
That result bounds the number of compact pieces rather than the
cardinality of their union. Here we construct $\omo$ individual
infinite sets; compact families serve as classes of candidates
that a single stage handles simultaneously.

We build $\A=\{a_\xi:\xi<\omo\}$ by recursion, adding one set
at each stage. We normalize a dominating family to obtain a pointwise
dominating sequence $\langle d_\alpha:\alpha<\omo\rangle$ with
$d_\alpha(n)\geq n$ for all $\alpha$ and $n$.
Each $d_\alpha$ determines a compact family $K_\alpha$ of infinite
sets whose increasing enumerations are bounded pointwise by $d_\alpha$.
Given an infinite set $y$, a first application of domination gives
$\alpha$ with $y\in K_\alpha$. If $y$ meets some $a_\eta$ with
$\eta<\alpha$ in an infinite set, it is already captured. Otherwise,
a second application gives $\beta$ such that $d_\beta$ bounds its
finite intersections with these earlier members. Removing from $\om$
the tails of those members beyond the prescribed bounds gives a set
$G_{\alpha,\beta}$ containing $y$. We assign the pair $(\alpha,\beta)$
in advance to a stage $\Gamma(\alpha,\beta)\geq\alpha$, when all the
members used to define $G_{\alpha,\beta}$ are available.

Each $d_\alpha$ also determines a partition of $\om$ into finite
intervals $I^\alpha_j$ met by every member of $K_\alpha$.
At stage $\Gamma(\alpha,\beta)$, we arrange that the new member
contains $G_{\alpha,\beta}\cap I^\alpha_j$ for infinitely many $j$
whenever $G_{\alpha,\beta}$ meets every interval of this partition.
It then has infinite intersection with every member of $K_\alpha$
contained in $G_{\alpha,\beta}$, including $y$.
Thus $\omo$ stages suffice without an enumeration of $[\om]^\om$.

The new member must be almost disjoint from the members already
constructed. The intersection bound handles the members with
$\eta<\alpha$. To handle the remaining members, from stage $\alpha$
onward we maintain an infinite set of reserved interval indices.
These sets decrease modulo finite sets as the construction proceeds.
At any stage $\xi\geq\alpha$, each $a_\eta$ with
$\alpha\leq\eta<\xi$ meets only finitely many of the intervals
still reserved for $\alpha$. Taking the portions of
$G_{\alpha,\beta}$ in infinitely many such intervals therefore
preserves almost disjointness. Between successive chosen intervals
we leave further intervals unused, maintaining the reservations
needed at later stages.

The construction uses the countability of every proper initial
segment of $\omo$ in two places. First, only countably many
reservations are active at stage $\xi<\omo$: a single sequence of
choices serves all of them, and at limit stages each has a
pseudo-intersection. Second, countability makes each intersection
trace a function in $\om^\om$, to which domination applies.
These steps do not extend directly to uncountable stages;
the general inequality $\aaa\leq\dd$ fails in Shelah's model.

The construction works for an arbitrary sequence of $\omo$ functions;
domination is used only to prove maximality. Since maximality is
witnessed by intervals fixed during the recursion, the proof of
Theorem~\ref{thm:main} also gives a preservation theorem.
For transitive models $V$ and $W$ of ZFC, we call $W$ an \emph{outer
model} of $V$ if $V\subseteq W$ and they have the same ordinals.
In particular, every forcing extension of $V$ is an outer model of $V$.

\begin{theorem}[Preservation]\label{thm:preservation}
Let $D_0\subseteq\om^\om$ be a dominating family of size $\omo$.
There is a MAD family $\A$ of size $\omo$ which remains MAD in
every outer model in which $D_0$ remains dominating.
In particular, $\A$ is indestructible by every
$\om^\om$-bounding forcing.
\end{theorem}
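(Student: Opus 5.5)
We build $\A=\{a_\xi:\xi<\omo\}$ in $V$ by the recursion sketched in the introduction. We then check that the maximality argument uses only three things: objects fixed in $V$ during the recursion, absolute combinatorial facts about them, and the fact that $D_0$ is dominating. Once this is verified, the argument runs verbatim in any outer model $W$ in which $D_0$ is still dominating.

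First we normalize $D_0$. Enumerate it as $\langle f_\alpha:\alpha<\omo\rangle$ and set $d_\alpha(n)=\max\bigl(n,\max_{\gamma\le\alpha,\,\gamma\in e_\alpha[n]}f_\gamma(n)\bigr)$, where $e_\alpha:\om\to\alpha+1$ is a fixed surjection in $V$. Then $\langle d_\alpha\rangle$ is $\leas$-increasing with $d_\alpha(n)\ge n$. Pointwise domination is obtained by finite modification. For any $g\in\om^\om$ with $g\leas f_\gamma$, there are $\alpha\ge\gamma$ and a finite modification $d'$ of $d_\alpha$ with $g\le d'$ everywhere. We fold the countably many finite modifications into the sequence by re-indexing, which keeps the index set of size $\omo$. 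The key point is that $\{d_\alpha\}$ pointwise dominates every $g$ dominated by some $f_\gamma$. In $W$, $D_0$ is still dominating, so $\{d_\alpha\}$ is still pointwise dominating there. This is the only use of the hypothesis.

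Next we run the construction in $V$. We fix the bookkeeping $\Gamma$, the interval partitions $\langle I^\alpha_j\rangle$ determined by $d_\alpha$ (every set whose enumeration is bounded by $d_\alpha$ meets every $I^\alpha_j$), and the reservations. At stage $\xi=\Gamma(\alpha,\beta)$, the set $G_{\alpha,\beta}=\om\setminus\bigcup_{\eta<\alpha}(a_\eta\setminus d_\beta(k_\eta))$ is defined. Here $k_\eta$ is the index of $\eta$ under the fixed enumeration $e_\alpha$, so the union is of countably many tails, and we use $\min(\ldots)$ to make it almost-disjoint-compatible. If $G_{\alpha,\beta}$ meets every $I^\alpha_j$, we ensure that $a_\xi\supseteq G_{\alpha,\beta}\cap I^\alpha_j$ for infinitely many reserved $j$. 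Almost disjointness is proved in $V$ as outlined in the introduction. We treat it as given by the paper's construction; the new work here is only absoluteness.

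Finally, we prove maximality in $W$. Let $y\in([\om]^\om)^W$. The increasing enumeration of $y$ is pointwise dominated by some $d_\alpha$, so $y$ meets every $I^\alpha_j$. Suppose $y\cap a_\eta$ is finite for all $\eta<\alpha$. Define $h(k)=\max(y\cap a_{e_\alpha(k)})+1$, a function in $\om^\om\cap W$. Pick $\beta$ with $h\le d_\beta$ pointwise. Then $y\subseteq G_{\alpha,\beta}$, because $y$ avoids each tail $a_\eta\setminus d_\beta(k_\eta)$. Hence $G_{\alpha,\beta}\supseteq y$ meets every $I^\alpha_j$. The statement ``$G_{\alpha,\beta}\cap I^\alpha_j\neq\emptyset$ for all $j$'' is arithmetic in parameters from $V$, so it holds in $V$. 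Therefore the stage $\Gamma(\alpha,\beta)$ performed the capture in $V$, giving $a_\xi\supseteq G_{\alpha,\beta}\cap I^\alpha_j$ for infinitely many $j$. Since $y\subseteq G_{\alpha,\beta}$ and $y\cap I^\alpha_j\ne\emptyset$, we get that $y\cap a_\xi$ is infinite. Almost disjointness and infiniteness of $\A$ are absolute, so $\A$ is MAD in $W$.

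For the last clause, take an $\om^\om$-bounding forcing extension $W=V[G]$. Every new function is bounded by a ground-model function, which is dominated by some member of $D_0$. So $D_0$ remains dominating, and the first part applies.

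The main obstacle is ensuring that the normalization and the definition of $G_{\alpha,\beta}$ depend only on data fixed in $V$. The choice of $\beta$ varies with $y\in W$, but it always ranges over $V$'s index set $\omo$. The capture condition must also be a property of $G_{\alpha,\beta}$ alone, decided in $V$, and not of $y$. I would check carefully that the pointwise-domination normalization survives in $W$, since it relies on $D_0$ being $\leas$-dominating in $W$ together with the finite-modification closure computed in $V$.
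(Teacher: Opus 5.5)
Your proposal is correct and is essentially the paper's proof. It closes the normalized family under finite modifications, all of which lie in $V$, so that pointwise domination persists in $W$; it bounds the trace of $y$ relative to a ground-model enumeration of $\A_\alpha$ to get $y\subseteq G_{\alpha,\beta}$; and it uses absoluteness of ``$G_{\alpha,\beta}$ meets every $I^\alpha_j$'' to conclude that stage $\Gamma(\alpha,\beta)$ already performed the capture in $V$.

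One harmless slip: your diagonal formula need not make $\langle d_\alpha\rangle$ $\leas$-increasing, because the surjections $e_\gamma$ and $e_\alpha$ may list the earlier $f_\delta$ at very different speeds. This does not matter here, since only $f_\alpha\leas d_\alpha$ and $d_\alpha(n)\geq n$ are used; monotonicity is needed only for the refinement in Section~\ref{sec:ch}.
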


Since $D_0$ dominates in the ground model, it remains dominating
in an outer model if and only if the ground-model functions dominate
there. Equivalently, the outer model adds no function unbounded
over the ground model. This hypothesis cannot be weakened to adding
no dominating reals: by a theorem of Shelah (see
\cite[Theorem~4.6(5)]{BGHR26}), under CH every MAD family is
destroyed by some proper forcing that adds no dominating reals.

Theorem~\ref{thm:preservation} bears on the route by which a negative
answer to Roitman's question was expected. Shelah suggested obtaining
$\dd<\aaa$ with $\mathfrak c=\om_2$ by a countable support iteration
of proper $\om^\om$-bounding forcings over a model of CH, each
destroying one MAD family. By his iteration theorems, it suffices to
prove assertion~$(*)$ of \cite[pp.~21--22]{S00}: every MAD family is
destroyed by such a forcing of size $\omo$. In proving~$(*)$ one may
assume CH or even $\diamondsuit_{\omo}$, and replace the size bound
by the $\aleph_2$-properness isomorphism condition. Shelah later noted
that such an iteration keeps $\dd=\omo$, but that the forcing needed
for a single step was not known to exist \cite[p.~188]{S04}.
Hru\v{s}\'ak, observing that Shelah's preservation theorems set up
the framework for the anticipated negative solution, asked for the
same step without the size bound: assuming CH, can every MAD family
be destroyed by a proper $\om^\om$-bounding forcing
\cite[p.~351]{H01c}? Guzm\'an and Hru\v{s}\'ak restated this as
\cite[Problem~1.2]{GH22}. Brendle, Guzm\'an, Hru\v{s}\'ak and
Raghavan asked it without assuming CH \cite[Problem~1.3]{BGHR26},
as did Chodounsk\'y and Guzm\'an in terms of forcings preserving
dominating families \cite[p.~14]{CG21}.

Two earlier preservation results come close to a negative answer to
this question. Under CH, Hru\v{s}\'ak proved that for each proper
$\om^\om$-bounding forcing $\mathbb P$ of size $\omo$ there is a
$\mathbb P$-indestructible MAD family \cite[Lemma~III.1]{H01S}.
Guzm\'an and Hru\v{s}\'ak proved that Shelah--Stepr\=ans MAD families
survive every strategically bounding forcing
\cite[Corollary~4.7(1)]{GH22}; such families exist under CH
\cite[Proposition~4.4]{GH22}.

Theorem~\ref{thm:preservation} gives, under $\dd=\omo$, a single MAD
family that survives every $\om^\om$-bounding forcing, whether or not
it is proper and whatever its size. It therefore answers
Hru\v{s}\'ak's question and its restatement by Guzm\'an and
Hru\v{s}\'ak negatively, and its versions without CH have a
negative answer in every model of $\dd=\omo$. Since Shelah's
iteration keeps $\dd=\omo$, Theorem~\ref{thm:main} already shows that it cannot produce a model
of $\dd<\aaa$. Theorem~\ref{thm:preservation} locates the obstruction
at a single step: assertion~$(*)$ fails in every model of CH, and so
in each of the forms described above.

Hru\v{s}\'ak also asked whether every Cohen-indestructible MAD family
can be destroyed by some $\om^\om$-bounding forcing, not necessarily
proper, noting that for Roitman's question it would suffice to answer
this under CH \cite[Question~9]{H01c}. The family of
Theorem~\ref{thm:preservation} can be chosen Cohen-indestructible;
in fact almost strongly separable, hence Shelah--Stepr\=ans and tight
(Theorem~\ref{thm:cohen}). This answers his question negatively
in every model of $\dd=\omo$.
The family is thus a Shelah--Stepr\=ans MAD family that survives
every $\om^\om$-bounding forcing, not only the strategically bounding
ones considered in \cite{GH22}.

Section~\ref{sec:prelim} contains an elementary lemma and the
notation for traces, Section~\ref{sec:construction} the recursion,
Section~\ref{sec:maximality} the proofs of Theorems~\ref{thm:main}
and~\ref{thm:preservation}, and Section~\ref{sec:ch} the almost
strongly separable refinement.

\section{Intervals and traces}\label{sec:prelim}

We write $\om=\{0,1,\ldots\}$. Intervals of integers have their usual
meaning; in particular $[0,m)$ has $m$ elements. For an infinite set
$x\subseteq\om$, let $e_x\colon\om\to\om$ be its increasing enumeration.
For $f,g\in\om^\om$, the notation $f\leq g$ means pointwise domination,
and $f\leas g$ means eventual domination. We write $f=^*g$ when $f$
and $g$ differ at only finitely many coordinates. For subsets of
$\om$, $x\subseteq^*z$ means that $x\setminus z$ is finite.

Put $\Delta=\{d\in\om^\om:(\forall n)\ d(n)\geq n\}$.
For $d\in\Delta$, define
\[
 K_d=\{x\subseteq\om:(\forall n)\
                 |x\cap[0,d(n)]|\geq n+1\}.
\]
Thus $K_d=\{x\in[\om]^\om:e_x\leq d\}$. The defining conditions are
clopen in $2^\om$, so $K_d$ is compact; it is also upward closed
under inclusion. Define
\[
 h_d(0)=0,\qquad h_d(j+1)=d(h_d(j))+1,\qquad
 I^d_j=[h_d(j),h_d(j+1)).
\]
Since $d(n)\geq n$, the intervals $I^d_j$ form a partition of $\om$
into nonempty finite intervals.

\begin{lemma}\label{lem:intervals}
Every member of $K_d$ meets every interval $I^d_j$.
\end{lemma}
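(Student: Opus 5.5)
The plan is a direct counting argument from the definitions of $K_d$ and $h_d$. Fix $x\in K_d$ and an index $j$, and write $n=h_d(j)$. I will show that $x\cap I^d_j\neq\emptyset$ by comparing how many elements $x$ has below $n$ with how many it has up to $d(n)$.

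First, since $d(n)\geq n$, the inequality $h_d(j+1)=d(n)+1>n$ holds, and we have
\[
I^d_j=[n,d(n)+1)=[n,d(n)].
\]
So it suffices to produce an element of $x$ in $[n,d(n)]$.

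Next, I apply the defining condition of $K_d$ at the index $n$. It gives $|x\cap[0,d(n)]|\geq n+1$. On the other hand, the interval $[0,n)$ has only $n$ elements, so $|x\cap[0,n)|\leq n$. Subtracting,
\[
|x\cap[n,d(n)]|=|x\cap[0,d(n)]|-|x\cap[0,n)|\geq (n+1)-n=1.
\]
This says exactly that $x$ meets $I^d_j$.

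There is no real obstacle here. The only point needing care is the bookkeeping convention that $[0,m)$ has $m$ elements, together with checking that $I^d_j$ is precisely $[h_d(j),d(h_d(j))]$. Both follow immediately from the recursive definition $h_d(j+1)=d(h_d(j))+1$. The case $j=0$ is covered by the same computation with $n=0$.
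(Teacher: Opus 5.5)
Your proof is correct and is essentially the paper's own argument. You apply the defining condition of $K_d$ at $n=h_d(j)$ to get at least $n+1$ points of $x$ in $[0,d(n)]$; at most $n$ of them lie below $n$, so one lies in $[n,d(n)]=I^d_j$.
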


\begin{proof}
Let $h=h_d(j)$ and $x\in K_d$. There are at least $h+1$ points of $x$
in $[0,d(h)]$ and only $h$ integers below $h$. Hence
$x\cap[h,d(h)]\ne\varnothing$.
\end{proof}

For a sequence $\mathbf c=\langle c_k:k\in\om\rangle$ of subsets of
$\om$ and $g\in\om^\om$, put
\[
 G(\mathbf c,g)=
 \om\setminus\bigcup_{k\in\om}\bigl(c_k\setminus[0,g(k))\bigr).
\]
Then
\begin{equation}\label{eq:traces}
 x\subseteq G(\mathbf c,g)
 \quad\Longleftrightarrow\quad
 (\forall k)\ x\cap c_k\subseteq[0,g(k)).
\end{equation}
In particular, $G(\mathbf c,g)$ has finite intersection with each
$c_k$. If $x\cap c_k$ is finite for every $k$, define its trace
by $t_x(k)=1+\max(x\cap c_k)$ when $x\cap c_k\ne\varnothing$,
and $t_x(k)=0$ otherwise.
Whenever $t_x\leq g$, equation~\eqref{eq:traces} gives
$x\subseteq G(\mathbf c,g)$.

\section{The construction}\label{sec:construction}

Fix an arbitrary sequence $\langle d_\alpha:\alpha<\omo\rangle$
in $\Delta$, and abbreviate $K_\alpha=K_{d_\alpha}$ and
$I^\alpha_j=I^{d_\alpha}_j$. No domination assumption is needed in
this section.

Order $\omo\times\omo$ first by the maximum coordinate and then
lexicographically, and let $\Gamma(\alpha,\beta)$ be the order type
of the set of predecessors of $(\alpha,\beta)$. This well-order has
size $\aleph_1$ and every proper initial segment of it is countable,
so $\Gamma$ is a bijection from $\omo\times\omo$ onto $\omo$. Moreover,
$\Gamma(\alpha,\beta)\geq\alpha$, because the pairs $(\gamma,0)$ with
$\gamma<\alpha$ precede $(\alpha,\beta)$.

The following lemma is the combinatorial core of the argument.
It states the simultaneous capture property that we shall obtain
by recursion.

\begin{lemma}[Construction lemma]\label{lem:construction}
There is an almost disjoint family
$\A=\{a_\xi:\xi<\omo\}$ of $\omo$ distinct infinite sets,
together with, for each $\alpha<\omo$, an enumeration
$\mathbf c^\alpha=\langle c^\alpha_k:k\in\om\rangle$ of
$\{a_\eta:\eta<\alpha\}$, possibly with repetitions
(with the convention $c^0_k=\varnothing$ for every $k$), such that the
following holds for all $\alpha,\beta<\omo$. If
$G(\mathbf c^\alpha,d_\beta)$ meets all but finitely many intervals
$I^\alpha_j$, then $a_{\Gamma(\alpha,\beta)}$ contains
$G(\mathbf c^\alpha,d_\beta)\cap I^\alpha_j$ for infinitely many $j$.
In particular, for every $x\subseteq\om$,
\[
 x\in K_\alpha,\quad x\subseteq G(\mathbf c^\alpha,d_\beta)
 \quad\Longrightarrow\quad
 |x\cap a_{\Gamma(\alpha,\beta)}|=\om.
\]
\end{lemma}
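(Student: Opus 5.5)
We build the $a_\xi$ by recursion on $\xi<\omo$. Along the way we also maintain, for each $\alpha\le\xi$, an infinite set $R^\alpha_\xi\subseteq\om$ of \emph{reserved} interval indices for the partition $\langle I^\alpha_j:j\in\om\rangle$. The invariants at stage $\xi$ are:
\begin{enumerate}
\item[(i)] $R^\alpha_{\xi'}\subseteq^* R^\alpha_\xi$ whenever $\alpha\le\xi\le\xi'$;
\item[(ii)] for $\alpha\le\eta<\xi$, the set $a_\eta$ meets $I^\alpha_j$ for only finitely many $j\in R^\alpha_\xi$.
\end{enumerate}
The enumerations are fixed as soon as they are needed. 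At stage $\alpha$ we let $\mathbf c^\alpha$ enumerate the countable set $\{a_\eta:\eta<\alpha\}$, with repetitions allowed and $c^0_k=\varnothing$ for every $k$. We also set $R^\alpha_\alpha=\om$, which satisfies (ii) vacuously.

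At a limit stage $\xi$, only countably many $\alpha<\xi$ are active. For each of them we take $R^\alpha_\xi$ to be an infinite pseudo-intersection of the $\subseteq^*$-decreasing sequence $\langle R^\alpha_\zeta:\alpha\le\zeta<\xi\rangle$. Since this sequence is indexed by a countable ordinal, we may pass to a cofinal $\om$-sequence and diagonalize. Condition (ii) transfers, because each instance of it at $\xi$ was already true at some earlier stage $\zeta<\xi$ and the new reservation sets are almost contained in the old ones.

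At stage $\xi$, first write $\xi=\Gamma(\alpha^*,\beta^*)$, so that $\alpha^*\le\xi$. Put $G=G(\mathbf c^{\alpha^*},d_{\beta^*})$. Enumerate the active indices $\{\alpha:\alpha\le\xi\}$ as $\langle\alpha_n:n\in\om\rangle$ so that each one occurs infinitely often. We then choose by recursion finite sets and indices:
\[
j_0<j_1<\cdots \text{ in } R^{\alpha^*}_\xi
\quad\text{with } G\cap I^{\alpha^*}_{j_m}\neq\varnothing \text{ whenever possible},
\]
together with witnesses $i_m\in R^{\alpha_m}_\xi$ that we \emph{protect}. The set $a_\xi$ is the union of the pieces $G\cap I^{\alpha^*}_{j_m}$, or, if $G$ misses cofinitely many of the reserved intervals, some arbitrary points of the reserved intervals, which keeps $a_\xi$ infinite and new. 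Protecting means that no piece chosen later meets $I^{\alpha_m}_{i_m}$. This is possible because the chosen pieces are finite and each partition consists of finite intervals, so one can always pass to larger indices. We then set
\[
R^{\alpha}_{\xi+1}=\{i_m:\alpha_m=\alpha\},
\]
which is infinite since $\alpha$ occurs infinitely often in the enumeration, and contained in $R^\alpha_\xi$, so (i) holds. Condition (ii) for the new set $a_\xi$ holds because all but finitely many protected intervals are avoided by $a_\xi$; we arrange that pieces chosen at step $m$ avoid the intervals $I^{\alpha_{m'}}_{i_{m'}}$ for all $m'\le m$, and only the finitely many pieces chosen before step $m'$ can meet $I^{\alpha_{m'}}_{i_{m'}}$, leaving finitely many exceptions for each $\alpha$. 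Condition (ii) for the older sets follows by shrinking.

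It remains to check that $\A$ is almost disjoint and that the capture property holds.

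\emph{Almost disjointness.} Let $\eta<\xi=\Gamma(\alpha^*,\beta^*)$.
\begin{itemize}
\item If $\eta<\alpha^*$, then $a_\eta=c^{\alpha^*}_k$ for some $k$. Since $G$ has finite intersection with each $c^{\alpha^*}_k$, and $a_\xi\subseteq G$ in the main case, $a_\xi\cap a_\eta$ is finite. In the degenerate case, where the capture hypothesis fails, we instead choose the arbitrary points from $G$ itself whenever $G$ is infinite. If $G$ is finite, we need a set almost disjoint from everything constructed so far. Such a set exists because countably many almost disjoint sets never form a MAD family, and we pick it along the reserved intervals.
\item If $\alpha^*\le\eta<\xi$, then by (ii) the set $a_\eta$ meets only finitely many of the intervals $I^{\alpha^*}_j$ with $j\in R^{\alpha^*}_\xi$, and $a_\xi$ lives inside those intervals.
\end{itemize}
Distinctness of the $a_\xi$ follows from almost disjointness together with infinitude.

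\emph{Capture.} If $G$ meets all but finitely many intervals $I^{\alpha^*}_j$, then it meets every $I^{\alpha^*}_{j}$ for $j$ in the infinite set $R^{\alpha^*}_\xi$, apart from finitely many. So infinitely many full pieces $G\cap I^{\alpha^*}_{j_m}$ are taken into $a_\xi$.

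The ``in particular'' clause then follows. Take $x\in K_{\alpha}$ with $x\subseteq G$. Every interval $I^\alpha_j$ meets $x$ by Lemma~\ref{lem:intervals}, hence meets $G$. So $a_{\Gamma(\alpha,\beta)}\supseteq G\cap I^\alpha_j\supseteq x\cap I^\alpha_j\neq\varnothing$ for infinitely many $j$, and $x\cap a_{\Gamma(\alpha,\beta)}$ is infinite.

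The main obstacle is the successor step. It has to balance three demands at once: taking whole pieces of $G$ in reserved $\alpha^*$-intervals, protecting infinitely many reserved intervals for every one of the countably many active $\alpha$, and making sure these requirements do not conflict. The conflict is resolved because each piece is finite and the other partitions are into finite intervals, so every requirement can be met by moving far enough to the right. Carefully bookkeeping the order of the choices is the delicate part.
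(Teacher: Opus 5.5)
Your overall plan matches the paper's. You use shrinking reservations $R^\alpha_\xi$, an enumeration of $\xi+1$ with infinite repetitions, alternating pieces and protected intervals, diagonal pseudo-intersections at limits, and you split almost disjointness at $\alpha^*$. \textbf{The gap is the degenerate case}, where $G$ meets $I^{\alpha^*}_j$ for only finitely many $j\in R^{\alpha^*}_\xi$; as you set it up, it does not work.

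You require two things of $a_\xi$ there:
\begin{itemize}
\item its points come from $G$, so that your first bullet handles $\eta<\alpha^*$;
\item it lies inside the reserved intervals $I^{\alpha^*}_j$, $j\in R^{\alpha^*}_\xi$, so that your second bullet handles $\alpha^*\le\eta<\xi$.
\end{itemize}
In this case $G\cap\bigcup_{j\in R^{\alpha^*}_\xi}I^{\alpha^*}_j$ is finite, so no infinite set satisfies both. If instead you let $a_\xi$ leave the reserved $\alpha^*$-intervals, being a subset of $G$ says nothing about the members $a_\eta$ with $\alpha^*\le\eta<\xi$. The set $G$ is defined only from $\A_{\alpha^*}$.

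The missing observation is that this case carries no capture obligation, so $G$ should be discarded entirely. The paper sets $(\rho,H)=(0,\om)$ and builds $a_\xi$ from whole intervals $I^0_{j_n}$ with $j_n\in R^0_\xi$, interleaved with protected intervals exactly as in the main case. Every $\eta<\xi$ satisfies $0\le\eta$, so your invariant (ii) for $\alpha=0$ makes $a_\xi$ almost disjoint from all of $\A_\xi$ at once. No appeal to non-maximality of countable families is needed. Your subcase ``$G$ finite'' gestures at a fallback of this kind, but it must be used throughout the degenerate case. It should also use the reservations of index $0$, which constrain every earlier member, not those of $\alpha^*$, which say nothing about the $a_\eta$ with $\eta<\alpha^*$.

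There is also a smaller slip in your check of (ii) for the new set $a_\xi$. You argue that each protected interval $I^{\alpha_{m'}}_{i_{m'}}$ can be met only by the finitely many pieces chosen before step $m'$. That bounds how many pieces meet each protected interval. It does not bound how many protected $\alpha$-intervals $a_\xi$ meets, so it does not give finitely many exceptions. The fix is to order the choices:
\begin{itemize}
\item choose $i_m$ so that $I^{\alpha_m}_{i_m}$ lies entirely above every piece chosen so far;
\item then choose $j_{m+1}$ so that $I^{\alpha^*}_{j_{m+1}}$ lies entirely above $I^{\alpha_m}_{i_m}$.
\end{itemize}
This is what the paper does with the bounds $b_n$. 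Then $a_\xi$ misses every protected interval, and (ii) holds with no exceptions. The choice is available by your own remark that one can always pass to larger indices.
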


The last assertion follows from the first. By
Lemma~\ref{lem:intervals}, such an $x$ meets every interval
$I^\alpha_j$, and it does so inside $G(\mathbf c^\alpha,d_\beta)$.

In terms of \eqref{eq:traces}, stage $\Gamma(\alpha,\beta)$ captures
every member of $K_\alpha$ whose intersections with the sets
$c^\alpha_k$ are bounded by $d_\beta$.
The rest of this section proves Lemma~\ref{lem:construction} by a
recursion defining $a_\xi$ at stage $\xi$. Write
$\A_\xi=\{a_\eta:\eta<\xi\}$. At the beginning of stage $\alpha$,
fix an enumeration $\mathbf c^\alpha$ of $\A_\alpha$ as in the lemma.
For $\alpha,\beta<\omo$, put
$G_{\alpha,\beta}=G(\mathbf c^\alpha,d_\beta)$;
in particular $G_{0,\beta}=\om$.

For $\alpha\leq\xi$ we also maintain sets $R^\alpha_\xi\subseteq\om$
of \emph{reserved} indices. Besides almost disjointness, at stage
$\xi$ we maintain the reservation invariant $(\mathrm{R}_\xi)$: the
sets $R^\alpha_\eta$ with $\alpha\leq\eta\leq\xi$ are infinite,
\begin{equation}\label{eq:tower}
 R^\alpha_\alpha=\om,\qquad
 R^\alpha_\zeta\subseteq^*R^\alpha_\eta
 \qquad(\alpha\leq\eta\leq\zeta\leq\xi),
\end{equation}
and
\begin{equation}\label{eq:reserved}
 a_\eta\cap I^\alpha_j=\varnothing
 \qquad(\alpha\leq\eta<\xi,\ j\in R^\alpha_{\eta+1}).
\end{equation}
Thus, for each $\alpha$, the sets $R^\alpha_\eta$ form a
$\subseteq^*$-decreasing family, and every member added from stage
$\alpha$ onward misses the intervals of $d_\alpha$ reserved after it.

\begin{lemma}\label{lem:reserved}
Assume $(\mathrm{R}_\xi)$. If $\alpha\leq\eta<\xi$, then $a_\eta$
meets only finitely many intervals $I^\alpha_j$ with
$j\in R^\alpha_\xi$.
\end{lemma}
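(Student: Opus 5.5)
Fix $\alpha\leq\eta<\xi$. The plan is to combine the reservation condition \eqref{eq:reserved} at the single index $\eta$ with the almost-decreasing condition \eqref{eq:tower} between $\eta+1$ and $\xi$.

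First, since $\eta<\xi$, we have $\alpha\leq\eta+1\leq\xi$, so $R^\alpha_{\eta+1}$ is one of the sets governed by $(\mathrm{R}_\xi)$. Applying \eqref{eq:tower} with the pair $\eta+1\leq\xi$ gives
\[
 R^\alpha_\xi\subseteq^* R^\alpha_{\eta+1}.
\]
Hence the set $F=R^\alpha_\xi\setminus R^\alpha_{\eta+1}$ is finite.

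Second, apply \eqref{eq:reserved} with this $\eta$. This is legitimate because $\alpha\leq\eta<\xi$. It says that $a_\eta\cap I^\alpha_j=\varnothing$ for every $j\in R^\alpha_{\eta+1}$. Therefore any $j\in R^\alpha_\xi$ with $a_\eta\cap I^\alpha_j\neq\varnothing$ must lie outside $R^\alpha_{\eta+1}$, that is, in $F$. So $a_\eta$ meets at most $|F|$ intervals $I^\alpha_j$ with $j\in R^\alpha_\xi$, which is finitely many.

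There is no genuine obstacle. The only point requiring care is checking that the indices $\eta+1$ and $\xi$ fall within the ranges for which $(\mathrm{R}_\xi)$ asserts \eqref{eq:tower} and \eqref{eq:reserved}. In particular, when $\xi=\eta+1$ the set $F$ is empty, and the conclusion is simply that $a_\eta$ misses every reserved interval.
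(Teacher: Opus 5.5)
Your proof is correct and is essentially the paper's argument. It uses \eqref{eq:tower} to see that $R^\alpha_\xi\setminus R^\alpha_{\eta+1}$ is finite, and \eqref{eq:reserved} to see that $a_\eta$ misses every $I^\alpha_j$ with $j\in R^\alpha_{\eta+1}$. Your additional check that the indices fall within the range covered by $(\mathrm{R}_\xi)$ is accurate.
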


\begin{proof}
By \eqref{eq:tower}, $R^\alpha_\xi\setminus R^\alpha_{\eta+1}$ is
finite, and by \eqref{eq:reserved}, $a_\eta$ misses $I^\alpha_j$ for
every $j\in R^\alpha_{\eta+1}$.
\end{proof}

We now describe stage $\xi$. Suppose that $\A_\xi$ is almost
disjoint and $(\mathrm{R}_\xi)$ holds. Let
$(\alpha,\beta)=\Gamma^{-1}(\xi)$, so $\alpha\leq\xi$.
If $G_{\alpha,\beta}$ meets $I^\alpha_j$ for infinitely many
$j\in R^\alpha_\xi$, set $(\rho,H)=(\alpha,G_{\alpha,\beta})$;
otherwise set $(\rho,H)=(0,\om)$. In either case the set
$J=\{j\in R^\rho_\xi:H\cap I^\rho_j\ne\varnothing\}$ is infinite.
Enumerate the countable set $\xi+1$ as
$\langle\gamma_n:n\in\om\rangle$ so that each ordinal occurs
infinitely often.

At each finite step we choose an interval contributing to $a_\xi$
and a later interval to reserve for $\gamma_n$.
Starting with $b_0=0$, choose $j_n\in J$ and
$i_n\in R^{\gamma_n}_\xi$ such that
\[
 b_n\leq\min I^\rho_{j_n},\qquad
 \max I^\rho_{j_n}<\min I^{\gamma_n}_{i_n},\qquad
 b_{n+1}=\max I^{\gamma_n}_{i_n}+1.
\]
These choices are possible because both sets of available indices
are infinite. Define $a_\xi=\bigcup_{n\in\om}(H\cap I^\rho_{j_n})$ and set
\[
 R^\gamma_{\xi+1}=\{i_n:\gamma_n=\gamma\}\quad(\gamma\leq\xi),\qquad
 R^{\xi+1}_{\xi+1}=\om.
\]

\begin{lemma}\label{lem:stage}
The set $a_\xi$ is infinite and almost disjoint from every member
of $\A_\xi$. The invariant $(\mathrm{R}_{\xi+1})$ holds.
Let $(\alpha,\beta)=\Gamma^{-1}(\xi)$. If $G_{\alpha,\beta}$ meets
all but finitely many intervals $I^\alpha_j$, then $a_\xi$ contains
$G_{\alpha,\beta}\cap I^\alpha_j$ for infinitely many $j$.
\end{lemma}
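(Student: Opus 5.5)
We check the three assertions of Lemma~\ref{lem:stage} directly from the choices made at stage $\xi$, taking them in the order: infinitude and almost disjointness, then $(\mathrm{R}_{\xi+1})$, then capture.

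\emph{Infinitude and almost disjointness.} Since $j_n\in J$, each piece $H\cap I^\rho_{j_n}$ is nonempty. The inequalities $\max I^\rho_{j_n}<\min I^{\gamma_n}_{i_n}<b_{n+1}\leq\min I^\rho_{j_{n+1}}$ make the $j_n$ strictly increasing, so the pieces are pairwise disjoint and $a_\xi$ is infinite. For $a_\eta\in\A_\xi$ we split into two cases.
\begin{itemize}
\item If $\rho=\alpha$ and $\eta<\alpha$, then $a_\eta=c^\alpha_k$ for some $k$. Hence $a_\xi\subseteq H=G(\mathbf c^\alpha,d_\beta)$ has finite intersection with $a_\eta$ by \eqref{eq:traces}.
\item If $\rho\leq\eta<\xi$, then Lemma~\ref{lem:reserved} applied with $\alpha=\rho$ says that $a_\eta$ meets only finitely many $I^\rho_j$ with $j\in R^\rho_\xi$. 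All $j_n$ lie in $J\subseteq R^\rho_\xi$ and are distinct, and each interval is finite, so $a_\eta\cap a_\xi$ is finite.
\end{itemize}
When $\rho=0$ only the second case occurs. When $\rho=\alpha$ the two cases cover all $\eta<\xi$.

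\emph{The invariant $(\mathrm{R}_{\xi+1})$.} Each $\gamma\leq\xi$ occurs as $\gamma_n$ for infinitely many $n$. The $i_n$ are strictly increasing in position, and intervals of a single partition $I^\gamma$ at increasing positions have increasing indices. So the $i_n$ with $\gamma_n=\gamma$ are distinct, and $R^\gamma_{\xi+1}$ is infinite. By construction $R^\gamma_{\xi+1}\subseteq R^\gamma_\xi$, which is genuine inclusion. Transitivity of $\subseteq^*$ with the old tower then gives \eqref{eq:tower} up to $\xi+1$. The case $R^{\xi+1}_{\xi+1}=\om$ is trivial.

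For \eqref{eq:reserved}, the only new instances are $\eta=\xi$ and $j=i_m\in R^\gamma_{\xi+1}$. The interval $I^\gamma_{i_m}$ lies strictly between $\max I^\rho_{j_m}$ and $b_{m+1}\leq\min I^\rho_{j_{m+1}}$. The set $a_\xi$ lives inside $\bigcup_n I^\rho_{j_n}$, and these intervals are ordered and avoid every gap $(\max I^\rho_{j_m},b_{m+1})$. Hence $a_\xi\cap I^\gamma_{i_m}=\varnothing$. Old instances with $\eta<\xi$ are unchanged, since the $R^\alpha_{\eta+1}$ for $\eta<\xi$ are not modified.

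\emph{Capture.} This is the only place where the case split matters, and checking it carefully is the main point. Suppose $G_{\alpha,\beta}$ meets all but finitely many $I^\alpha_j$. Since $\alpha\leq\xi$, $(\mathrm{R}_\xi)$ gives that $R^\alpha_\xi$ is infinite, so $G_{\alpha,\beta}$ meets $I^\alpha_j$ for infinitely many $j\in R^\alpha_\xi$. Therefore the first case applies and $(\rho,H)=(\alpha,G_{\alpha,\beta})$. Then $a_\xi\supseteq H\cap I^\alpha_{j_n}$ for every $n$, and the $j_n$ are infinitely many distinct indices, as required.
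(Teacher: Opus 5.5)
Your proposal is correct and follows the paper's proof essentially step by step. It uses the same ordering of the chosen intervals, the same case split for almost disjointness via \eqref{eq:traces} and Lemma~\ref{lem:reserved}, the same check of $(\mathrm{R}_{\xi+1})$, and the same observation that the capture hypothesis forces $(\rho,H)=(\alpha,G_{\alpha,\beta})$, while spelling out details the paper leaves implicit (transitivity of $\subseteq^*$, and why the $i_n$ with $\gamma_n=\gamma$ are distinct).
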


\begin{proof}
By construction,
\[
 b_n\leq\min I^\rho_{j_n}\leq\max I^\rho_{j_n}
       <\min I^{\gamma_n}_{i_n}\leq\max I^{\gamma_n}_{i_n}<b_{n+1}.
\]
Thus the intervals $[b_n,b_{n+1})$ are pairwise disjoint, the $j_n$
are distinct, and
\[
 a_\xi\cap[b_n,b_{n+1})=H\cap I^\rho_{j_n}\ne\varnothing,
 \qquad a_\xi\cap I^{\gamma_n}_{i_n}=\varnothing.
\]
In particular, $a_\xi$ is infinite.

Let $\eta<\xi$. If $\eta<\rho$, then $\rho=\alpha$ and
$H=G_{\alpha,\beta}$. Since $a_\eta=c^\alpha_k$ for some $k$,
\eqref{eq:traces} shows that $a_\xi\subseteq H$ has finite
intersection with $a_\eta$. If $\rho\leq\eta<\xi$,
Lemma~\ref{lem:reserved} shows that $a_\eta$ meets only finitely
many intervals $I^\rho_j$ with $j\in R^\rho_\xi$. The set $a_\xi$
is contained in the union of the distinct finite intervals
$I^\rho_{j_n}$, where $j_n\in R^\rho_\xi$, so again
$a_\eta\cap a_\xi$ is finite.

To check $(\mathrm{R}_{\xi+1})$, note that each $\gamma\leq\xi$
occurs as $\gamma_n$ infinitely often and that the corresponding
$i_n$ are distinct. Hence $R^\gamma_{\xi+1}\subseteq R^\gamma_\xi$
is infinite, which gives \eqref{eq:tower}. The display above gives
\eqref{eq:reserved} for $\eta=\xi$.

Finally, suppose that $G_{\alpha,\beta}$ meets all but finitely many
intervals $I^\alpha_j$. Then it meets $I^\alpha_j$ for infinitely many
$j\in R^\alpha_\xi$. Thus $(\rho,H)=(\alpha,G_{\alpha,\beta})$, and
$a_\xi\supseteq G_{\alpha,\beta}\cap I^\alpha_{j_n}$
for every $n$.
\end{proof}

\begin{proof}[Proof of Lemma~\ref{lem:construction}]
Almost disjointness and $(\mathrm{R}_0)$ hold trivially at stage
$0$, with $R^0_0=\om$. Lemma~\ref{lem:stage} supplies the successor
step. At a limit stage $\xi$, almost disjointness and
\eqref{eq:reserved} are inherited. For $\alpha<\xi$, choose an
increasing sequence $\langle\eta_k:k\in\om\rangle$ cofinal in
$[\alpha,\xi)$ and let $R^\alpha_\xi$ consist of the $k$-th element
of $R^\alpha_{\eta_0}\cap\cdots\cap R^\alpha_{\eta_k}$, for
$k\in\om$. These finite intersections are infinite by
\eqref{eq:tower}, so $R^\alpha_\xi$ is an infinite
pseudo-intersection of the sets $R^\alpha_\eta$ with
$\alpha\leq\eta<\xi$. Put $R^\xi_\xi=\om$. This gives
$(\mathrm{R}_\xi)$.
Thus the recursion produces $\omo$ pairwise almost disjoint
infinite sets, which are necessarily distinct. The property of
intervals follows from Lemma~\ref{lem:stage} at stage
$\Gamma(\alpha,\beta)$.
\end{proof}

\section{Maximality and preservation}\label{sec:maximality}

\begin{proof}[Proof of Theorems~\ref{thm:main} and~\ref{thm:preservation}]
Work in $V$ and fix a dominating family $D_0\subseteq\om^\om$
of size $\omo$. Put
\[
 D_1=\{\max(f,\operatorname{id}):f\in D_0\},\qquad
 D=\{d\in\Delta:(\exists f\in D_1)\ d=^*f\}.
\]
Each function has only countably many finite modifications, so
$|D|\leq\omo$. Since $D$ is dominating, it is uncountable; hence
$|D|=\omo$. Enumerate $D$ as $\langle d_\alpha:\alpha<\omo\rangle$
and carry out the recursion of Section~\ref{sec:construction} in $V$.
Let $\A=\{a_\xi:\xi<\omo\}$ and the enumerations $\mathbf c^\alpha$
be those produced by this recursion, with the intervals and
integers chosen at each stage. We use the intervals $I^\alpha_{j_n}$
chosen in the recursion to prove maximality simultaneously in $V$
and in every outer model in which $D_0$ remains dominating.

Let $W$ be an outer model in which $D_0$ remains dominating;
we allow $W=V$. The family $D$ dominates pointwise in $W$.
Indeed, for $g\in\om^\om\cap W$, choose $f\in D_1$ with
$g\leas f$. The function $\max(f,g)$ is a finite modification
of $f$. Every such modification belongs to $V$, so
$\max(f,g)\in D$. Almost disjointness of $\A$ is also preserved,
since intersections of its members are unchanged finite sets.

Given $y\in[\om]^\om\cap W$, choose $\alpha<\omo^V$ with
$e_y\leq d_\alpha$. If $y$ has infinite intersection with a
member of $\A_\alpha$, there is nothing to prove. Otherwise its
trace $t_y$ with respect to $\mathbf c^\alpha$ belongs to
$\om^\om\cap W$. Choose $\beta<\omo^V$ with $t_y\leq d_\beta$.
Equation~\eqref{eq:traces} gives $y\subseteq G_{\alpha,\beta}$.
Here $G_{\alpha,\beta}$ is the same set in $V$ and in $W$, since
$G(\mathbf c,g)$ is defined arithmetically from $\mathbf c$ and $g$.

We check that the capture argument applies even when $y\notin V$.
Write $G=G_{\alpha,\beta}\in V$. Since $e_y\leq d_\alpha$,
Lemma~\ref{lem:intervals}, applied in $W$, shows that $y$ meets every
interval $I^\alpha_j$; since $y\subseteq G$, so does $G$. This
concerns fixed finite sets and is absolute between $V$ and $W$.
Thus, in $V$, stage $\xi=\Gamma(\alpha,\beta)$ used
$(\rho,H)=(\alpha,G)$, and
$a_\xi\supseteq G\cap I^\alpha_{j_n}$ for every $n$.
In $W$, the set $y$ meets each of these
ground-model finite sets. They are pairwise disjoint, so
$|y\cap a_\xi|=\om$.

This proves that $\A$ is MAD in $W$. Taking $W=V$ gives
$\aaa=\omo$, proving Theorem~\ref{thm:main}.
Finally, in any extension by an $\om^\om$-bounding forcing,
every function is eventually dominated by a ground-model
function, and hence by a member of $D_0$. Thus $D_0$ remains
dominating, and the same family $\A$ remains MAD, proving
Theorem~\ref{thm:preservation}.
\end{proof}

\section{An almost strongly separable family}\label{sec:ch}

Choosing the pointwise dominating sequence
$\langle d_\alpha:\alpha<\omo\rangle$ to be nondecreasing under $\leas$
makes the family produced by the same recursion almost strongly
separable. We first recall the relevant definitions.
For an almost disjoint family $\A$, let $\mathcal I(\A)$ be the
ideal generated by $\A$ and the finite sets. For an ideal
$\mathcal I$ on $\om$ containing the finite sets, write
$\mathcal I^+=\mathcal P(\om)\setminus\mathcal I$, and let
$(\mathcal I^{<\om})^+$ consist of the families
$X\subseteq[\om]^{<\om}\setminus\{\varnothing\}$ such that every
$B\in\mathcal I$ is disjoint from some $s\in X$.
The family $\A$ is \emph{almost strongly separable} if, for every
$X\in(\mathcal I(\A)^{<\om})^+$, some $a\in\A$ contains infinitely
many elements of $X$ \cite[Definition~5.2]{BGHR26}.
If one only requires some $B\in\mathcal I(\A)$, rather than some
$a\in\A$, to contain infinitely many elements of $X$, one obtains the
weaker \emph{Shelah--Stepr\=ans} property
\cite[Definitions~5.1 and~5.3]{BGHR26}.
A MAD family is \emph{tight} if, for every sequence
$\langle Y_n:n\in\om\rangle$ in $\mathcal I(\A)^+$, some
$B\in\mathcal I(\A)$ meets every $Y_n$ \cite[Definition~6.1(1)]{BGHR26}.
Every Shelah--Stepr\=ans MAD family is tight, and every tight MAD
family is Cohen-indestructible \cite[Section~6]{BGHR26}.

\begin{theorem}[$\dd=\omo$]\label{thm:cohen}
There is an almost strongly separable MAD family $\A$ of size $\omo$
that remains MAD in every outer model in which $\om^\om\cap V$
is dominating. In particular, $\A$ is Shelah--Stepr\=ans, tight
and Cohen-indestructible, and no $\om^\om$-bounding forcing
destroys it.
\end{theorem}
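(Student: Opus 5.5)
The plan is to run the recursion of Section~\ref{sec:construction} on a pointwise dominating sequence $\langle d_\alpha:\alpha<\omo\rangle$ chosen \emph{nondecreasing under $\leas$}. Maximality and preservation are then literally the proof of Section~\ref{sec:maximality}, which never used any ordering of the $d_\alpha$. The remaining points are the following.
\begin{enumerate}
\item Obtaining such a sequence. Since $\dd=\omo$, enumerate a dominating family of size $\omo$ as $\langle f_\alpha:\alpha<\omo\rangle$. By recursion, let $g_\alpha\in\Delta$ eventually dominate $f_\alpha$ and every $g_\gamma$ with $\gamma<\alpha$; this is possible since the set of these $g_\gamma$ is countable. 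Then close under finite modifications as in Section~\ref{sec:maximality}. The finite-modification closure destroys literal monotonicity, so I would instead enumerate $D$ so that $d_\alpha=^*g_{\pi(\alpha)}$ with $\pi$ nondecreasing. Then $\alpha\leq\alpha'$ implies $d_\alpha\leas d_{\alpha'}$.
\item Reducing to one stage. Let $X\in(\mathcal I(\A)^{<\om})^+$. It suffices to find $\alpha,\beta$ such that for all but finitely many $j$ some $s\in X$ satisfies $s\subseteq G_{\alpha,\beta}\cap I^\alpha_j$. Then $G_{\alpha,\beta}$ meets almost all intervals $I^\alpha_j$, so by Lemma~\ref{lem:construction} the set $a_{\Gamma(\alpha,\beta)}$ contains $G_{\alpha,\beta}\cap I^\alpha_j$ for infinitely many $j$. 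Hence it contains infinitely many members of $X$.
\end{enumerate}

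\textbf{Handling the tails.} Given $\alpha$, positivity of $X$ gives, for each $h$, some $s\in X$ with $\min s\geq h$ and $s\cap(c^\alpha_0\cup\dots\cup c^\alpha_h)=\varnothing$. To see this, apply the definition of $(\mathcal I^{<\om})^+$ to $B=[0,h)\cup c^\alpha_0\cup\dots\cup c^\alpha_h\in\mathcal I(\A)$. Let $\varphi_\alpha(h)$ be the least bound $\max s$ over such choices, made nondecreasing. If $d_\alpha(h)\geq\varphi_\alpha(h)$, then every interval $I^\alpha_j=[h,d_\alpha(h)]$ with $h=h_{d_\alpha}(j)$ contains such an $s$. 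Now choose $\beta$ with $d_\beta(k)>d_\alpha(k)$ for all $k$, using pointwise domination. For $k\leq h$ the set $s$ is disjoint from $c^\alpha_k$. For $k>h$ we have $s\subseteq[0,d_\alpha(h)]\subseteq[0,d_\beta(k))$, since we may also assume $d_\alpha$ is nondecreasing. By \eqref{eq:traces}, this gives $s\subseteq G_{\alpha,\beta}$.

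\textbf{Main obstacle: finding $\alpha$ with $d_\alpha\geq^*\varphi_\alpha$.} The difficulty is that $\varphi_\alpha$ depends on $\A_\alpha$, hence on $\alpha$. Eventual domination suffices here, since only almost all intervals matter. This is where monotonicity must be used. I would build $\alpha_0<\alpha_1<\cdots$ with $d_{\alpha_{n+1}}\geq^*\varphi_{\alpha_n}$, and put $\alpha=\sup_n\alpha_n$, so that $d_\alpha\geq^* d_{\alpha_{n+1}}\geq^*\varphi_{\alpha_n}$ for all $n$. To transfer this to $\varphi_\alpha$ itself, I would try two things. First, arrange the enumerations $\mathbf c^\alpha$ at limit stages so that initial segments of $\mathbf c^\alpha$ interleave initial segments of the $\mathbf c^{\alpha_n}$ slowly. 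Then $\varphi_\alpha$ is bounded by a diagonal $\max_{n\leq N(h)}\varphi_{\alpha_n}(h')$, and the slowly growing $N$ is absorbed by the countably many eventual dominations. Alternatively, avoid the circularity by weakening what the intervals must contain: require only that $G_{\alpha,\beta}\cap I^\alpha_j$ contain some $s\in X$, and let $\beta$ absorb all dependence on $\A_\alpha$ while the intervals of $d_\alpha$ are merely coarse enough. The monotone choice of the $d_\gamma$ would then let the blocks of $d_\alpha$ swallow blocks chosen for earlier $\alpha_n$.

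\textbf{Consequences.} Once almost strong separability is established, the remaining properties follow as cited. Almost strongly separable implies Shelah--Stepr\=ans, which implies tight, which implies Cohen-indestructible \cite[Section~6]{BGHR26}. Indestructibility by $\om^\om$-bounding forcing is Theorem~\ref{thm:preservation} applied to this particular family.
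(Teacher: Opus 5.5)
You set things up as the paper does: the $\pi$-enumeration is the paper's block enumeration giving $d_\gamma\leas d_\delta$ for $\gamma\le\delta$; maximality and preservation come verbatim from Section~\ref{sec:maximality}; and the reduction to a single stage $\Gamma(\alpha,\beta)$ via Lemma~\ref{lem:construction} matches the paper. The gap is the step you call the main obstacle. You leave it open, and in your formulation it fails.

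Your $\varphi_\alpha(h)$ asks for $s\in X$ avoiding $c^\alpha_0,\dots,c^\alpha_h$, so the number of avoided sets is tied to the position $h$ of the interval. For $\alpha=\sup_n\alpha_n$, the sets $c^\alpha_0,\dots,c^\alpha_h$ lie in $\A_{\alpha_n}$ only for some $n=n(h)\to\infty$. They also sit at positions $k'$ of $\mathbf c^{\alpha_n}$ that may be far beyond $h$. So the facts $d_\alpha\geq^*\varphi_{\alpha_n}$ bound $\varphi_\alpha(h)$ at best by $d_\alpha(k')$, and only past a threshold depending on $n(h)$, not by $d_\alpha(h)$.

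Your first remedy cannot repair this. The enumerations $\mathbf c^\alpha$ are fixed during the recursion, before $X$ is given. The $\alpha_n$, and the thresholds beyond which $d_\alpha$ overtakes each $\varphi_{\alpha_n}$, depend on $X$, and so does the slowness you need. Your second remedy fails as stated. $G_{\alpha,\beta}$ has finite intersection with every $c^\alpha_k$, so for each $k$ all but finitely many of the chosen $s$ must avoid $c^\alpha_k$. Hence $\beta$ cannot absorb all dependence on $\A_\alpha$.

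The missing idea is to separate the number of avoided sets from the position, and to diagonalize only after the closure point is fixed. Let $e_\gamma(k,m)$ be the least $\max s$ over $s\in X$ disjoint from $c^\gamma_0\cup\dots\cup c^\gamma_k\cup[0,m)$. Require only $e_\delta(k,\cdot)\leas d_\delta$ for each $k$ separately. This does pass to a closure point $\delta$ of a map $F$ with $e_\gamma(k,\cdot)\leas d_{F(\gamma)}$ for all $k$. For fixed $k$, the finitely many sets $c^\delta_0,\dots,c^\delta_k$ lie in a single $\A_\gamma$ with $\gamma<\delta$, hence among $c^\gamma_0,\dots,c^\gamma_{k'}$. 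So $e_\delta(k,\cdot)\le e_\gamma(k',\cdot)\leas d_{F(\gamma)}\leas d_\delta$.

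Now choose thresholds $t(k)$ with $e_\delta(k,m)\le d_\delta(m)$ for $m\ge t(k)$. In the $j$-th interval, take $s_j$ avoiding only $c^\delta_0,\dots,c^\delta_{\kappa(j)}$, where $\kappa(j)=\max\{k:t(k)\le h_{d_\delta}(j)\}\to\infty$. Because $\kappa(j)$ may lie far below $h_{d_\delta}(j)$, your choice of $\beta$ from $d_\alpha$ alone no longer works. That choice also assumed $d_\alpha$ nondecreasing, which you cannot assume since $D$ contains all finite modifications. Instead, $y=\bigcup_j s_j$ meets each $c^\delta_k$ in a finite set, so its trace is defined. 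Choosing $\beta$ with $t_y\le d_\beta$ then gives $y\subseteq G_{\delta,\beta}$ by~\eqref{eq:traces}, and the capture lemma finishes the argument as you describe.
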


\begin{proof}
Work in $V$ and fix a dominating family
$\{g_\nu:\nu<\omo\}$. Since countably many functions are bounded,
we may recursively choose $f_\nu\in\om^\om$ with $g_\nu\leas f_\nu$ and
$f_\mu\leas f_\nu$ for $\mu<\nu$. Let $D_0=\{f_\nu:\nu<\omo\}$ and
define $D_1$ and $D$ as in Section~\ref{sec:maximality}.
Enumerate $D$, allowing repetitions, in consecutive blocks of order
type $\om$: the block $\langle d_{\om\cdot\nu+n}:n\in\om\rangle$
lists all finite modifications of $\max(f_\nu,\operatorname{id})$
that belong to $\Delta$. The resulting sequence dominates pointwise
and satisfies
\begin{equation}\label{eq:scale}
 d_\gamma\leas d_\delta\qquad(\gamma\leq\delta<\omo).
\end{equation}
Carry out the recursion of Section~\ref{sec:construction} with this
sequence. The proof of Section~\ref{sec:maximality} applies
unchanged, so $\A$ is MAD in every outer model in which $D_0$
remains dominating, that is, in which $\om^\om\cap V$ is dominating.

Fix $X\in(\mathcal I(\A)^{<\om})^+$. We shall find $\delta,\beta<\omo$
and sets $s_j\in X$ with $s_j\subseteq G_{\delta,\beta}\cap I^\delta_j$
for all sufficiently large $j$.
The interval capture property of Lemma~\ref{lem:construction} will
then place infinitely many of these sets in a single member of $\A$.
We first choose $\delta$ so that its intervals can accommodate
elements of $X$ avoiding progressively more members of $\A_\delta$;
we then use a trace bound to choose $\beta$.

For $\gamma<\omo$ and $k,m\in\om$, let $e_\gamma(k,m)$ be the least
value of $\max s$ among sets $s\in X$ disjoint from
$c^\gamma_0\cup\cdots\cup c^\gamma_k\cup[0,m)$.
Such sets exist by the positivity of $X$.
For each $\gamma$, choose $F(\gamma)<\omo$ such that
$e_\gamma(k,\cdot)\leas d_{F(\gamma)}$ for every $k$.
Such an index exists because the countably many functions
$e_\gamma(k,\cdot)$ have a common eventual bound and the sequence
$\langle d_\alpha:\alpha<\omo\rangle$ is dominating.
Choose a nonzero countable limit ordinal $\delta$ such that
$F(\gamma)<\delta$ whenever $\gamma<\delta$.

We claim that
\begin{equation}\label{eq:local-bound}
 e_\delta(k,\cdot)\leas d_\delta\qquad(k\in\om).
\end{equation}
Fix $k$.
The sets $c^\delta_0,\ldots,c^\delta_k$ are finitely many members
of $\A_\delta$, hence of $\A_\gamma$ for some $\gamma<\delta$, and
so they occur among $c^\gamma_0,\ldots,c^\gamma_{k'}$ for some $k'$.
Consequently, $e_\delta(k,\cdot)\leq e_\gamma(k',\cdot)
\leas d_{F(\gamma)}\leas d_\delta$ by $F(\gamma)<\delta$ and
\eqref{eq:scale}, proving \eqref{eq:local-bound}.

We now choose the sets $s_j$. Write $h=h_{d_\delta}$ and, by
\eqref{eq:local-bound}, choose integers $t(0)<t(1)<\cdots$ such that
$e_\delta(k,m)\leq d_\delta(m)$ whenever $m\geq t(k)$.
Choose $j_0$ with $h(j_0)\geq t(0)$. For $j\geq j_0$, put
$\kappa(j)=\max\{k:t(k)\leq h(j)\}$.
By the definition of $e_\delta$, there is $s_j\in X$ disjoint from
$c^\delta_0\cup\cdots\cup c^\delta_{\kappa(j)}\cup[0,h(j))$
with $\max s_j\leq d_\delta(h(j))$. Hence
$s_j\subseteq[h(j),d_\delta(h(j))]=I^\delta_j$.

Let $y=\bigcup_{j\geq j_0}s_j$. Since $\kappa(j)$ tends to infinity,
each $c^\delta_k$ meets only finitely many of the sets $s_j$.
Thus $y\cap c^\delta_k$ is finite for every $k$, and the trace
$t_y$ with respect to $\mathbf c^\delta$ is defined.
Choose $\beta$ with $t_y\leq d_\beta$. By \eqref{eq:traces},
$y\subseteq G_{\delta,\beta}$, and therefore
$s_j\subseteq G_{\delta,\beta}\cap I^\delta_j$ for every $j\geq j_0$,
as required.

In particular, $G_{\delta,\beta}$ meets all but finitely many
intervals $I^\delta_j$. By Lemma~\ref{lem:construction},
$a_{\Gamma(\delta,\beta)}$ contains $G_{\delta,\beta}\cap I^\delta_j$,
and hence $s_j$, for infinitely many $j\geq j_0$. These sets are
nonempty and lie in disjoint intervals, so they are distinct.
This proves almost strong
separability; the remaining assertions follow from the implications
above and the preservation property.
\end{proof}

\noindent\textbf{Use of AI.}
Over several weeks, the author worked on different parts of the argument
in parallel and submitted them for review to Anthropic's Claude models
(Fable 5, Opus 5, Fable 5.1, and Opus 5.5) and OpenAI's GPT models
(5.6 Sol and 6 Astra). The author proposed ideas, examples, and proof
strategies; the systems checked them, looked for counterexamples, and
suggested corrections, which the author adopted, modified, or discarded.
The two main combinatorial ideas---the simultaneous capture of candidates
through domination and intersection traces, and the system of coherent
interval reservations that preserves almost disjointness throughout the
recursion---are due to the author and were refined in this back-and-forth.
The argument as a whole, however, grew out of this process, and the human
and AI contributions to it cannot be precisely separated. This paper would
not exist without these AI systems. The author takes full responsibility
for all of its content, including the mathematical claims, proofs, and
references.

\end{document}